\newcommand{\betabold}    {\mbox{\boldmath${\beta}$}}
\newcommand{\gammabold}    {\mbox{\boldmath${\gamma}$}}
\newcommand{\xibold}     {\mbox{\boldmath${\xi}$}}
\newcommand{\etabold}     {\mbox{\boldmath${\eta}$}}
\newcommand{\real}{\ensuremath{\mathbb{R}}}
\newtheorem{lem}{Lemma}
\newtheorem{cor}{Corollary}
\title{On the Identifiability of the Functional Convolution Model}
\author{Giles Hooker}
\date{}
\begin{document}

\maketitle

\begin{abstract}
This report details conditions under which the Functional Convolution Model described in \citet{AHG13} can be identified from Ordinary Least Squares estimates without either dimension reduction or smoothing penalties. We demonstrate that if the covariate functions are not spanned by the space of solutions to linear differential equations, the functional coefficients in the model are uniquely determined in the Sobolev space of functions with absolutely continuous second derivatives.
\end{abstract}

\citet{AHG13} introduced the {\em Functional Convolution Model} (FCM) in which for each observation $i = 1,\ldots,n$ a functional response $Y_i(t)$ depends on the short-term history of one or more functional covariates $X_{ij}(t)$, $j = 1,\ldots,p$ which are measured on the same time domain $t \in [0, T_i]$ along with scalars $z_{ik}$ for $k = 1,\ldots,d$. This is expressed mathematically as
\begin{equation} \label{convolution}
y_i(t) = \beta_{00} + \sum_{j = 1}^{d} \beta_{0k} z_{ik} + \sum_{j=1}^p  \int_{0}^{\alpha_j} \beta_j(u)
x_{ij}(t-u)du + \epsilon_i(t), \ i = 1,\ldots,n.
\end{equation}
Here $y_i(t)$ responds  to the past $\alpha_j$ time units of
$x_{ij}$ via a functional linear model \citep{RamsaySilverman05} and this relationship
is constant over $t$. The $\epsilon_i(t)$ are assumed to be mean-zero stochastic processes with stationary covariance. Note that
while the $y_i(t)$ and the $x_{ij}(t)$ must share the same time domain, this
domain needs not be the same across different observations. We have
parameterized the model so that $\beta_j(0)$ represents the instantaneous
effect of $x_{ij}(t)$ on the response at time $t$. $\betabold_0 = (\beta_{00},\beta_{01},\ldots,\beta_{0d})$ represent coefficients for scalar covariates including the intercept.

An estimate of the parameters in the FCM via Ordinary Least Squares (OLS) was proposed, obtaining estimates for $\betabold(u) = (\betabold_0,\beta_1(u),\ldots,\beta_p(u))$ that minimize
\[
\mbox{SSE}(\betabold) = \sum_{i=1}^n \int_{\alpha^*}^{T_i} \left( y_i(t) -
\sum_{j = 1}^{d} \beta_{0j} z_{ij}  - \sum_{j=1}^p  \int_{t-\alpha_j}^t \beta_j(u) x_{ij}(t-u)du
\right)^2 dt
\]
for $\alpha^* = \max(\alpha_1,\ldots,\alpha_p)$. This has been chosen so that the range of $t$ in the outer integral ensures that
the range of $t-u$ within the squared term does not go below $0$. To this criterion, \citet{AHG13} added smoothing penalties for each of the $\beta_j(u)$ and represented them via a basis expansion.

The identifiability of the FCM under the OLS without penalization, and with arbitrarily complex basis expansions, is not clear.  The model can be placed between the scalar response model $y_i = \beta_0 + \int \beta_1(t) x_i(t) dt + \epsilon_i$ in which $\beta_1(t)$ cannot be identified without smoothing or dimension reduction and the concurrently linear model for functional responses $y_i(t) = \beta_0(t) + \beta_1(t) x_i(t) + \epsilon_i(t)$ in which $\beta_0(t)$ and $\beta_1(t)$ can be obtained from a linear regression at each time $t$ (although smoothing can still serve to reduce variance). The purpose of this report is to investigate under what circumstances both smoothing penalty and basis expansion can be removed. That is, restricting the $\beta_j(u)$ to lie in a Sobolev class of functions, under what conditions  does $\mbox{SSE}(\betabold)$ have a unique minimum without further restructions? In general, the design of covariate functions  in functional data analysis
has received little attention. The functional convolution models studied
here present a challenge in that identifiability depends on the finite-sample
design of the covariates. 

For this report, we assume our covariate processes $\beta_j(t)$ lie in the Sobolev space $W[0, \ \alpha_j]$ of functions defined
on $[0, \ \alpha_j]$ for which all second derivatives are absolutely continuous.
For convenience, we also assume that the $y_i(t)$ and $x_i(t)$ have been centered by
their integral averaged over all observations and ignore $\beta_0$. We also assume that the $Y_i$ have been centered by their time-series mean and no other scalar covariates are present in order to remove the intercept from the model.
Within this space, minimizing $\mbox{SSE}(\betabold)$ is equivalent to setting its Gateaux derivative to zero; that is, solving the variational problem
\begin{equation} \label{variational}
<\gammabold,G[\betabold]> = F(\gammabold), \ \forall \gammabold = (\gamma_1,\ldots,\gamma_p)
\in W[0, \ \alpha_1] \otimes \cdots \otimes W[0, \ \alpha_p]
\end{equation}
where
\[
F(\gamma) = \sum_{j=1}^p \sum_{i=1}^n \int_0^{\alpha_j} \gamma_j(u)
\int_{\alpha^*}^{T_i} x_{ij}(t-u) y_i(t) dt du,
\]
and
\[
G[\betabold] = \sum_{j,k=1}^p \sum_{i=1}^n \int_0^{\alpha_j} \int_{\alpha^*}^{T_i}
x_{ik}(t-v) x_{ij}(t-u) dt \beta_j(u) du.
\]
where the inner product is taken as the product $L^2$ inner product on
square-integrable functions
\[
<\gammabold,\betabold> = \sum_{j=1}^p \int_0^{\alpha_j} \gamma_j(u) \beta_j(u) du.
\]
The identification of the OLS estimates is now equivalent to the invertibility of $G$. In particular, $\betabold$ will
be uniquely identified if
\[
<\gammabold,G[\betabold]> = 0, \  \forall \gammabold \Rightarrow \betabold = 0
\]
and in particular if
\begin{equation} \label{identifiability}
<\betabold,G[\betabold]> =  \int_{\alpha^*}^{T_j} \sum_{i=1}^n  \left[\int_0^{\alpha_j}
x_{ij}(t-v) \beta_j(v) dv \right] \left[ \int_0^{\alpha_j} x_{ij}(t-u) \beta_j(u) du \right] dt > 0
\end{equation}
for all $j$ and all non-zero $\beta_j$.

In particular, we can
let $\xibold_1,\xibold_2,\ldots$ form a basis for $ W[0, \ \alpha_1] \otimes \cdots \otimes W[0, \ \alpha_p]$ with $\xibold_j = (\xi_{j1},\ldots,\xi_{jp})$ and \eqref{identifiability} reduces to the requirement that for every $j$ and $l$,
\begin{equation} \label{identifiability2}
\int_{0}^{\alpha_j} \xi_{jl} (u) x_{ij}(t-u) du \neq 0
\end{equation}
for $t$ in a subset of $[\alpha^*, T_i]$ of positive measure for at least one $i$.  This condition is not readily checked, particularly in real-world applications since it requires checking an infinite collection of inner-products; however the non-identifiability of a design can be readily assessed. Because of this \citet{AHG13} employed smoothing parameters to ensure the identifiability of their estimates. However, it is possible to characterize designs such that the collection
\[
x_{ijt}(u) = x_{ij}(t-u)
\]
spans a finite-dimensional space as $t$ is varied; i.e. for which there is a finite collection of functions
$\eta_{1}(u),\ldots,\eta_K(u)$ such that
\[
x_{ijt}(u) = \sum_{k=1}^K c_k(t) \eta_k(u)
\]
for all $t$. This set of self-similar functions can be expressed as solutions to a linear differential equation. In the lemma below we restrict to a single real-valued function for the sake of clarity and set $\alpha_j = 1$

\begin{lem} \label{findimlem}
Let $x(t)$ have continuous first derivatives, then $x$ satisfies
\begin{equation} \label{selfsimilar}
x(t+u) = \sum_{k=1}^K \zeta_k(t) \eta_k(u)
\end{equation}
for all $t$ and $\eta_k: [0, \ 1] \rightarrow \real$,  if and only if
\begin{equation} \label{selfsimfns}
x(t) = \sum_{k=1}^K c_k  t^{m_k} e^{a_k t} \sin( b_k t + d_k)
\end{equation}
for real-valued constants $(a_k,b_k,c_k,d_k)$, and integers $m_k$, $k = 1,\ldots,K$.
\end{lem}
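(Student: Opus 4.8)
The plan is to route both directions through the classical fact that \eqref{selfsimfns} describes exactly the kernel of a constant-coefficient linear differential operator, so that the separable structure in \eqref{selfsimilar} is equivalent to $x$ satisfying such an equation. For the ``if'' direction, suppose $x$ is given by \eqref{selfsimfns}. Writing $t^{m}e^{at}\sin(bt+d)$ as a complex-linear combination of $t^{m}e^{(a\pm ib)t}$ shows that $x$ is annihilated by the real operator $p(D)$, where $D$ denotes differentiation and $p(\lambda) = \prod_k\big(\lambda-(a_k+ib_k)\big)^{m_k+1}\big(\lambda-(a_k-ib_k)\big)^{m_k+1}$. The solution space $\mathcal S = \ker p(D)$ is finite-dimensional and invariant under translation, so $x(\cdot+t) \in \mathcal S$ for every $t$; fixing a basis $\eta_1,\dots,\eta_K$ of $\mathcal S$ and letting $\zeta_k(t)$ be the (smooth) coordinates of $x(\cdot+t)$ in that basis, evaluation at $u$ gives \eqref{selfsimilar}. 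Here $K=\deg p$ is in general larger than the number of summands in \eqref{selfsimfns} --- already $x(t)=t$ forces two separable terms --- so the two occurrences of ``$K$'' in the statement are to be read loosely, as asserting the existence of \emph{some} finite $K$ in each representation.

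For the ``only if'' direction, assume \eqref{selfsimilar} and, after discarding redundant terms, take $K$ minimal; then $\{\zeta_k\}$ and $\{\eta_k\}$ are each linearly independent. Independence of the $\zeta_k$ lets me choose $t_1,\dots,t_K$ with $\det(\zeta_k(t_l))\neq 0$, and inverting the system $x(t_l+u)=\sum_k\zeta_k(t_l)\eta_k(u)$ exhibits each $\eta_k$ as a fixed linear combination $\sum_l A_{kl}\,x(t_l+\cdot)$ of translates of $x$; in particular $\eta_k\in C^1$, and, with $E=\mathrm{span}\{\eta_1,\dots,\eta_K\}$, hypothesis \eqref{selfsimilar} says that $u\mapsto x(t+u)$ lies in the $K$-dimensional space $E$ for every $t$. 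The crucial step is that $E$ is invariant under $D$: being finite-dimensional, $E$ is closed under pointwise limits, and the difference quotients $h^{-1}\big(x(t+h+u)-x(t+u)\big)$ --- combinations of the two elements $u\mapsto x(t+h+u)$ and $u\mapsto x(t+u)$ of $E$ --- converge pointwise in $u$ to $x'(t+u)$, so $u\mapsto x'(t+u)$ lies in $E$; differentiating the representation $\eta_k=\sum_l A_{kl}\,x(t_l+\cdot)$ then gives $\eta_k'=\sum_l A_{kl}\,x'(t_l+\cdot)\in E$. Hence there is a real matrix $N$ with $\eta_k'=\sum_j N_{kj}\eta_j$, so $(\eta_1,\dots,\eta_K)$ solves a constant-coefficient linear system and, by the real Jordan form of $N$, each $\eta_k$ is a combination of terms $u^m e^{au}\sin(bu+d)$. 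Finally, since $u\mapsto x(t+u)=\sum_k\zeta_k(t)\eta_k(u)$ is a fixed combination of solutions of $\eta'=N\eta$, it is annihilated by $\chi_N(D)$ for every $t$; as the intervals $[t,t+1]$ exhaust the domain of $x$, the function $x$ itself is annihilated by this single constant-coefficient operator and hence has the form \eqref{selfsimfns}.

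I expect the one real obstacle to be this middle step: one must reduce to a linearly independent representation, realize the $\eta_k$ as honest linear functionals of translates of $x$ so that differentiability becomes available, and then use finite-dimensionality to trap $x'(t+\cdot)$ back inside $E$ --- it is exactly this closure property that promotes the bare self-similarity hypothesis to a constant-coefficient ordinary differential equation, after which the description of the solution set is standard linear theory. A small point to keep explicit along the way is that \eqref{selfsimilar} is posited for all $t$ (on an interval), which is what makes the translation arguments legitimate and the final conclusion global rather than merely piecewise.
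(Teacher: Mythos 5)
Your proof is correct, and it reaches the paper's conclusion by the same basic strategy --- differentiate the separable representation and use finite-dimensionality to extract a constant-coefficient linear ODE whose solutions are exactly the exponential--polynomial--sinusoid family --- but the mechanics differ in a way worth noting. The paper differentiates \eqref{selfsimilar} to obtain $\sum_k \zeta_k'(t)\eta_k(u) = \sum_k \zeta_k(t)\eta_k'(u)$, evaluates at the grid $u_l = (l-1)/K$, and writes the ODE $\zeta' = [X^{-}\dot{X}]\zeta$ for the $t$-side coefficient functions using a generalized inverse of the evaluation matrix $X_{lk} = \eta_k(u_l)$; the form of $x$ then follows since $x(t) = \sum_k \zeta_k(t)\eta_k(0)$. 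You instead derive the ODE on the $u$-side: after reducing to a minimal, linearly independent system you realize the $\eta_k$ as fixed linear combinations of translates of $x$, use difference quotients together with closedness of a finite-dimensional function space under pointwise limits to show that $\mathrm{span}\{\eta_1,\ldots,\eta_K\}$ is invariant under differentiation, and then patch the resulting annihilating operator over all $t$. Your route supplies two things the paper elides: first, the reduction to a linearly independent family and the choice of evaluation points with $\det(\zeta_k(t_l)) \neq 0$ justify the invertibility that the paper's generalized inverse $X^{-}$ quietly assumes (the ODE $\zeta' = X^{-}\dot{X}\zeta$ only follows when $X$ has full column rank, which is not argued there and need not hold for the fixed grid $u_l = (l-1)/K$); second, you establish the differentiability of the $\eta_k$, which the paper uses implicitly when forming $\dot{X}$. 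Your explicit converse via translation invariance of $\ker p(D)$ also fills in what the paper dismisses as ``easy to check,'' and your observation that the two occurrences of $K$ in the statement cannot literally coincide is a fair reading of a looseness in the lemma as written.
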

\begin{proof}
We observe that
\[
x(t+u+dt) = \sum_{k=1}^K \zeta_k(t+dt) \eta_k(u) =  \sum_{k=1}^K \zeta_k(t) \eta_k(u+dt)
\]
and thus
\begin{equation} \label{selfsimderiv}
x'(t+u) = \sum_{k=1}^K \zeta_k'(t) \eta_k(u)=  \sum_{k=1}^K \zeta_k(t) \eta_k'(u)
\end{equation}
where $x'(\cdot)$ is the derivative taken with respect to its argument.  We can now examine which $\zeta_k$ satisfy the second equality in  \eqref{selfsimderiv} by restricting to a finite set of values for $u$.  Defining a set of evaluation points $u_l = (l-1)/K$ for $k = 1,\ldots,K$ we can produce matrices
\[
X_{lk} = \eta_k \left( u_l \right), \  \dot{X} = \eta'_k \left( u_l \right).
\]
Note that these do not depend on $t$. The last equality in \eqref{selfsimderiv} restricted to $u_1,\ldots,u_K$ now defines the differential equation
\begin{equation} \label{linode}
\frac{d}{dt} \etabold = \left[ X^{-} \dot{X} \right] \etabold
\end{equation}
where $X^{-}$ is a generalized inverse. Solutions to \eqref{linode} have
general form of \eqref{selfsimfns} \citep[e.g.,][]{BorelliColeman}, and thus
$x(t)$ must at least be of this form. It is easy to check that any function of
the form \eqref{selfsimfns} satisfies \eqref{selfsimilar} for some $K$ and
$(\zeta_k,\eta_k, k = 1,\ldots,K)$, completing the converse implication.
\end{proof}

From this we directly obtain the following

\begin{cor}
The variational problem \eqref{variational} has a unique solution in $W[0, \ \alpha_1] \otimes \cdots \otimes W[0, \ \alpha_p]$ if and only if
\[
\sum_{i=1}^n \int_{\alpha^*}^{T_i} \left( x_{ij}(t) - \sum_{k=1}^{K_{ij}} c_{ijk} t^{m_{ijk}} e^{a_{ijk}t} \sin(b_{ijk}t + d_{ijk}) \right)^2 dt >0
\]
for each $j$ and any finite choice of $K_{ij}$, $a_{ijk}$, $b_{ijk}$, $c_{ijk}$, $d_{ijk}$ and $m_{ijk}$.
\end{cor}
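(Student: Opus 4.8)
The plan is to deduce the corollary from Lemma~\ref{findimlem} together with the reduction, already carried out above, of identifiability of the OLS estimate to strict positivity of the diagonal quadratic form in \eqref{identifiability}. First I would record the precise spanning condition at stake. Identifiability of \eqref{variational} is equivalent to injectivity of $G$; using \eqref{identifiability}--\eqref{identifiability2} with test directions supported on a single coordinate, injectivity fails as soon as, for some $j$, there is a nonzero $\beta_j\in W[0,\alpha_j]$ with $\int_0^{\alpha_j} x_{ij}(t-u)\beta_j(u)\,du=0$ for a.e.\ $t\in[\alpha^*,T_i]$ and every $i$; and \eqref{identifiability} delivers injectivity once no such $\beta_j$ exists, for any $j$ (working coordinatewise as in Lemma~\ref{findimlem}; for $p>1$ this additionally needs the routine absence of linear relations among the families $\{x_{ij}(t-\cdot)\}$ across different $j$). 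Since $x_{ij}$ is continuous and $\beta_j$ integrable, that integral is continuous in $t$, so the condition reads: for each $j$, the collection $\bigcup_i\{x_{ijt}:t\in[\alpha^*,T_i]\}$, with $x_{ijt}(u)=x_{ij}(t-u)$, has dense linear span in $L^2[0,\alpha_j]$.

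Next I would transfer Lemma~\ref{findimlem} to this setting. For fixed $i,j$, the substitution $u\mapsto\alpha_j-u$ followed by rescaling puts the relation $x_{ij}(t-u)=\sum_k\zeta_k(t)\eta_k(u)$ into the form of the lemma, with the outer variable confined to the bounded interval $[\alpha^*,T_i]$; since the differential-equation argument in the proof of Lemma~\ref{findimlem} is local, its conclusion persists, and $\{x_{ijt}:t\in[\alpha^*,T_i]\}$ spans a finite-dimensional subspace iff $x_{ij}$ coincides on $[\alpha^*,T_i]$ with a finite sum $\sum_k c_{ijk}\,t^{m_{ijk}}e^{a_{ijk}t}\sin(b_{ijk}t+d_{ijk})$ (these being real-analytic, agreement on $[\alpha^*,T_i]$ then forces it on all of $[\alpha^*-\alpha_j,T_i]$, so the two intervals implicit in the problem are interchangeable). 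The easy direction of the corollary is now immediate: if the displayed residual vanishes for some $j$ and some finite choice of parameters, every summand vanishes, so each $x_{ij}$ has the special form and the linear span $V$ of $\bigcup_i\{x_{ijt}\}$ is a finite-dimensional subspace of $L^2[0,\alpha_j]$; the orthogonal projection onto $V$ of the span of $\dim V+1$ monomials in $W[0,\alpha_j]$ has nontrivial kernel, producing a nonzero $\beta_j\in W[0,\alpha_j]$ annihilating every $x_{ijt}$, hence a nonzero null direction of $G$ and non-uniqueness.

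For the converse, assume the residual is strictly positive for every $j$ and every finite choice of parameters. Then for each $j$ some $x_{ij}$ is not of the special form on $[\alpha^*,T_i]$, so by the transfer above and Lemma~\ref{findimlem} the family $\{x_{ijt}:t\in[\alpha^*,T_i]\}$ spans an \emph{infinite}-dimensional subspace, and it remains to upgrade this to density: no nonzero $\beta_j\in W[0,\alpha_j]$ can satisfy $\int_0^{\alpha_j}x_{ij}(t-u)\beta_j(u)\,du\equiv0$ on $[\alpha^*,T_i]$ when $x_{ij}$ is not special. I would establish this by differentiating the vanishing relation in $t$: each differentiation the regularity of $x_{ij}$ permits adjoins $x_{ij}'(t-\cdot),x_{ij}''(t-\cdot),\dots$ to the functions orthogonal to $\beta_j$, and the resulting chain either terminates by exposing $\beta_j$ itself and forcing $\beta_j=0$ (as in the model computation $x(s)=(s-c)|s-c|$, where a third derivative returns a multiple of $\beta(t-c)$), or closes up into a finite-order constant-coefficient identity, which by Lemma~\ref{findimlem} says precisely that $x_{ij}$ \emph{is} of the special form --- a contradiction. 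Hence the span is dense for every $j$, $G$ is injective, \eqref{variational} has a unique solution, and the two directions together give the corollary.

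The main obstacle is exactly this density step: showing that an infinite-dimensional translate-family $\{x_{ij}(t-\cdot)\}$ is automatically \emph{total} in $L^2[0,\alpha_j]$ rather than merely non-finite-dimensional. The bootstrapping sketched above is the natural route but is delicate, because Lemma~\ref{findimlem} assumes only continuous first derivatives while iterated differentiation wants higher smoothness or analyticity; one must either manufacture the higher derivatives from the vanishing relation itself or argue more directly. An attractive alternative is to recast $\int_0^{\alpha_j}x_{ij}(t-u)\beta_j(u)\,du$ as a compactly supported convolution that vanishes on an interval of positive length and invoke Titchmarsh's convolution theorem, together with analyticity of the associated Laplace or Fourier transform, to force $\beta_j=0$ unless $x_{ij}$ satisfies the relevant linear ODE; making that precise under the hypotheses actually in force is where the substantive work lies.
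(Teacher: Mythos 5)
The paper offers no argument for this corollary at all---it is introduced with ``From this we directly obtain''---so the only comparison available is against what that assertion must implicitly mean. Your ``only if'' direction captures it correctly: a vanishing residual forces each $x_{ij}$ into the form \eqref{selfsimfns}, Lemma~\ref{findimlem} then makes the translate family $\{x_{ijt}\}$ finite-dimensional in $L^2[0,\alpha_j]$, and any nonzero $\beta_j\in W[0,\alpha_j]$ orthogonal to that finite-dimensional span is a null direction of $G$. One quibble: the corollary's integral tests $x_{ij}$ only on $[\alpha^*,T_i]$, whereas the translates $x_{ij}(t-u)$, $u\in[0,\alpha_j]$, see $x_{ij}$ on $[\alpha^*-\alpha_j,T_i]$; your attempt to bridge the two intervals by analyticity is unjustified, since $x_{ij}$ is only assumed $C^1$ and agreement with an analytic function on one interval does not propagate to a larger one.

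The serious issue is the step you yourself flag as the main obstacle---upgrading ``the translate family is infinite-dimensional'' to ``no nonzero $\beta_j\in W[0,\alpha_j]$ annihilates it.'' This gap cannot be closed, because the ``if'' direction of the corollary is false as stated, and the paper's own example in the very next paragraph witnesses this: with $\tilde{x}=\sum 2^{-4k}\xi_{2k}$ and $\xi_k(u)=\sin(2k\pi u)$, the addition formula $\sin(4k\pi(t-u))=\sin(4k\pi t)\cos(4k\pi u)-\cos(4k\pi t)\sin(4k\pi u)$ shows every translate $\tilde{x}(t-\cdot)$ lies in the closed span of $\{\sin(4k\pi u),\cos(4k\pi u)\}_k$, so every odd-frequency $\xi_l$ is a nonzero smooth function orthogonal to the whole family; yet $\tilde{x}$ has infinitely many nonzero Fourier frequencies, hence solves no finite-order constant-coefficient linear ODE and is not any finite combination of the functions in \eqref{selfsimfns}, so the displayed residual is strictly positive for every finite parameter choice. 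Neither of your proposed repairs survives this example: the differentiation bootstrap must fail (here $\tilde{x}$ is $C^\infty$ and the chain simply never terminates), and Titchmarsh's convolution theorem does not apply because the convolution vanishes on an interior interval rather than near an endpoint of its support. The defensible conclusion is that only the ``only if'' half of the corollary holds; positivity of the residual is necessary but not sufficient for identifiability, and your refusal to claim the sufficiency half is the correct instinct---the proposal's gap is a gap in the corollary itself.
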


It is not difficult to provide example designs that satisfy
\eqref{identifiability2}. Consider the basis of periodic functions on $[0, \ 1]$
given by $\xi_k(u) = \sin(2k \pi t)$ then taking $x(t)$ to be defined on $[0, \ 1]$,
say, with
\[
x(t) = \sum 2^{-k} \xi_k(t)
\]
has non-zero inner product on $[0, \ 1]$ with $\xi_k(u)$ for each $k$. We note
that the range of $x(t)$ need not be restricted to $[0, \ 1]$. However, between
the finite-dimensional design described in Lemma \ref{findimlem} and the
identifiable design, it is possible to find $x_i(t)$ that is orthogonal to an
infinite dimensional subspace. Continuing our example, setting
\[
\tilde{x} = \sum 2^{-4k} \xi_{2k}(t)
\]
will yield $<\tilde{x},\xi_k>=0$ for all $k$ odd provided the domain of $x$ is
of integer length. Evaluating identifiability based on finite-dimensional
approximations as given, for example in the appendix of \citet{AHG13} is
straightforward. However, it seems more challenging to provide a protocol for
designing experiments for which this model will be employed.

Beyond conditions for identifiability, the implication of these calculations for the convergence rates of the FCM remain un-investigated. As an alternative to the OLS formulation, we can down-sample the data in the FCM to consider only the values of $y_i(t)$ at $U$ intervals. That is we observe that $y_i(lU)$ for $l = 1,\ldots,n_i=T_i/U$ taken at the discrete times $lU$ exactly follow a functional linear model. Hence as either $T_i \rightarrow \infty$ or $n \rightarrow \infty$, results in \citet{ZhangYu08} provide the consistency of our estimates when combined with smoothing penalties. Although such penalties needs not be required for the FCM, they were employed in \citet{AHG13} and we speculate that in fact the convergence rates that can be obtained for the FCM are no better than the functional linear model. 

\bibliographystyle{chicago}
\bibliography{grant}

\begin{thebibliography}{}

\bibitem[\protect\citeauthoryear{Asencio, Hooker, and Gao}{Asencio
  et~al.}{2013}]{AHG13}
Asencio, M., G.~Hooker, and H.~O. Gao (2013).
\newblock Functional convolution models.
\newblock {\em Statistical Modeling\/}~{\em to appear}.

\bibitem[\protect\citeauthoryear{Borelli and Coleman}{Borelli and
  Coleman}{2004}]{BorelliColeman}
Borelli, R.~L. and C.~S. Coleman (2004).
\newblock {\em Differential Equations: A Modeling Perspective}.
\newblock New York: John Wiley \& Sons.

\bibitem[\protect\citeauthoryear{Ramsay and Silverman}{Ramsay and
  Silverman}{2005}]{RamsaySilverman05}
Ramsay, J.~O. and B.~W. Silverman (2005).
\newblock {\em Functional Data Analysis}.
\newblock New York: Springer.

\bibitem[\protect\citeauthoryear{Zhang and Yu}{Zhang and Yu}{2008}]{ZhangYu08}
Zhang, C. and T.~Yu (2008).
\newblock Semiparametric detection of significant activation for brain {fMRI}.
\newblock {\em Annals of Statistics\/}~{\em 36}, 1693–1725.

\end{thebibliography}

\end{document}